\font\chuto=cmbx10 at 14pt
\font\chudaude=cmcsc10
\font\ita=cmti9
\title{\vspace{-2cm}
\vspace{2cm} {\chuto An algorithm for determining Lie algebra types
}\footnotetext{\\
{$^*$ Corresponding author
\\
\bf Key
words:} Lie algebra, Jordan–Kronecker invariant.   
\\
2000 AMS Mathematics Subject Classification: Primary 17B05, Secondary 15A21; 68W30.
   } }
\author{{\bf   Tu T. C. Nguyen$^1$, Tuan A. Nguyen$^2$, Vu A. Le$^{3,*}$}\\
\vspace{-0.2cm}{\ita    
} \\
\vspace{-0.2cm}{\ita  $^1$College of Natural Sciences, Can Tho University}\\
\vspace{-0.2cm}{\ita    E-mail: camtu@ctu.edu.vn
}\\
\vspace{-0.2cm}{\ita   $^2$Department of Primary Education, Ho Chi Minh City University of Education}\\
\vspace{-0.2cm}{\ita    E-mail: tuannguyenanh@hcmue.edu.vn
}\\
\vspace{-0.2cm}{\ita $^3$Department of Economic Mathematics, University of Economics and Law}  
\\
\vspace{-0.2cm}{\ita Vietnam National University, Ho Chi Minh City}  
\\
\vspace{-0.2cm}{\ita    E-mail: vula@uel.edu.vn
}\\
 }
\newcommand{\A}{\mathcal{A}}%
\newcommand{\B}{\mathcal{B}}%
\newcommand{\Pp}{\mathcal{P}}%
\newcommand{\rank}{\mathrm{rank\,}}%
\newcommand{\ind}{\mathrm{ind\,}}%
\newcommand{\G}{\mathcal{G}}%
\begin{document}
\date{ }
\maketitle
 \label{firstpage}

\newtheorem{theorem}{Theorem}[section]
\newtheorem{prove}{Proof of Theorem}[section]
\newtheorem{lemma}[theorem]{Lemma}
\newtheorem{proposition}[theorem]{Proposition}
\theoremstyle{definition}
\newtheorem{definition}{Definition}[section]
\newtheorem{remark}[definition]{Remark}
\newtheorem{example}[definition]{Example}
\newtheorem*{notation}{Notation}

\newcommand{\map}[3]{{#1} :{#2} \longrightarrow {#3}}
\newcommand{\mapp}[5]{\begin{array}{rl}
{#1} :{#2} & \longrightarrow {#3} \\
{#4} & \longmapsto {#5}
\end{array}}
  
\begin{abstract}
This paper investigates the Jordan--Kronecker invariant of finite dimensional complex Lie algebras. We present an explicit algorithm for determining the type of a given Lie algebra from its Jordan--Kronecker invariant. The algorithm is implemented in a specific Matlab program.
\end{abstract}
 
\section*{\bf Introduction}

The study of invariants of Lie algebras plays a fundamental role in understanding their structure and classification. Among these invariants, the Jordan–Kronecker invariant, which was introduced by A. V. Bolsinov and P. Zhang \cite{Bol}, is an effective tool for describing the canonical form of the generic pencil associated with a Lie algebra. Given a finite-dimensional complex Lie algebra $\G$, its Jordan–Kronecker invariant captures the number and sizes of the Jordan and Kronecker blocks in the Jordan–-Kronecker decomposition of the generic pencil. Moreover, this invariant provides an effective mechanism for classifying Lie algebras according to their Jordan and Kronecker types.

In this paper, we address the problem of determining the type of a finite-dimensional complex Lie algebra from its Jordan–Kronecker invariant. Building on structural properties established in \cite{Bol}, we present an explicit algorithm that identifies whether the generic pencil contains Jordan blocks, Kronecker blocks, or both. The algorithm is fully implemented in a specific Matlab program.

The structure of the paper is as follows. In Section \ref{sec2}, we recall the necessary background on pencils of skew-symmetric matrices and the Jordan--Kronecker invariants. Section \ref{sec3} presents the main algorithm together with its Matlab implementation, as well as illustrative examples. In the final section, we present a table summarizing the types of 7-dimensional complex indecomposable solvable Lie algebras having 5-dimensional nilradicals. The results are obtained by applying the algorithm to each algebra in this class.

\section{Basic definitions and theorems}\label{sec2}
This section briefly reviews some notions and results related to Jordan--Kronecker invariants of Lie algebras. For a more detailed exposition, see \cite{Bol}.

\begin{theorem}[{\cite[Theorem 2]{Bol}}]\label{JKTheorem}
    Let $\A$ and $\B$ be two skew-symmetric bilinear forms on a complex vector space $V$. Then by an appropriate choice of a basis, their matrices can be simultaneously reduced to the following canonical block-diagonal form:
    \[
    \A \mapsto \begin{pmatrix}
    \A_1 & & \\
    & \ddots & \\
    & & \A_k
    \end{pmatrix}, \quad
    \B \mapsto \begin{pmatrix}
    \B_1 & & \\
    & \ddots & \\
    & & \B_k
    \end{pmatrix}
    \]
    where the pairs of the corresponding blocks $\A_i$ and $\B_i$ can be of the following three types:
    
    {\center \includegraphics[scale=0.85]{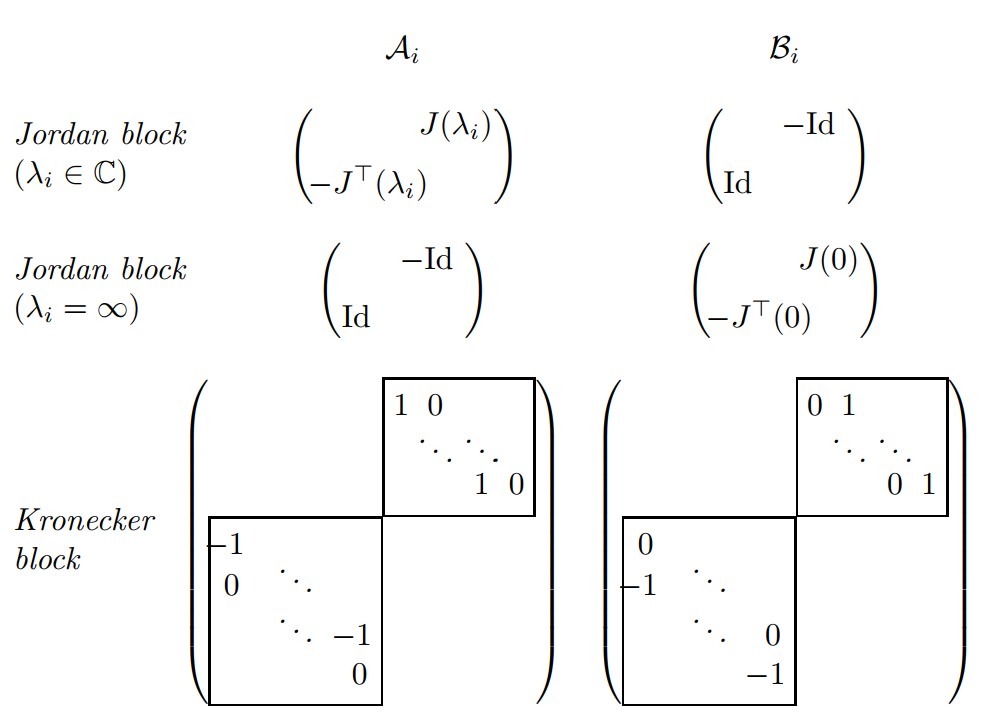}\\}

    \noindent where $J(\lambda_i)$ denotes the standard Jordan block
    \[
    J(\lambda_i) = 
    \begin{pmatrix}
    \lambda_i & 1 & & \\
    & \lambda_i & \ddots & \\
    & & \ddots & 1 \\
    & & & \lambda_i
    \end{pmatrix}.
    \]
\end{theorem}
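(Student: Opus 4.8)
The plan is to prove this via the classical Kronecker--Weierstrass theory of matrix pencils, specialized to the skew-symmetric case, by establishing a decomposition of the pencil into indecomposable summands that is compatible with simultaneous congruence $\A \mapsto C^{\top}\A C$ and $\B \mapsto C^{\top}\B C$. First I would regard the pair $(\A,\B)$ as the linear pencil $\lambda\A+\mu\B$ of skew-symmetric forms on $V$, and split the analysis according to whether the pencil is \emph{regular} or \emph{singular}, that is, whether the polynomial $\det(\lambda\A+\mu\B)$ vanishes identically. The three block types of the statement should then emerge as the possible indecomposable pencils.

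In the regular case, after a projective change of the pencil parameter I may assume that $\B$ is nondegenerate, so that $T \co \B^{-1}\A$ is a well-defined operator on $V$. Skew-symmetry of $\A$ and $\B$ forces $T$ to be self-adjoint with respect to the symplectic form $\B$, since $\B(Tx,y)=\A(x,y)=-\A(y,x)=\B(x,Ty)$. Decomposing $V$ into the generalized eigenspaces of $T$ and choosing a Jordan basis on each, the $\B$-self-adjointness pairs every Jordan chain with a dual chain of equal length; writing the two forms in the resulting basis produces exactly the Jordan-type blocks of the statement, with the eigenvalue $\lambda_i$ of $T$ labelling the standard block $J(\lambda_i)$ and the degenerate directions accounting for the infinite-eigenvalue blocks.

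For the singular case the degeneracy is encoded in the space of polynomial solutions $v(\lambda,\mu)$ of $(\lambda\A+\mu\B)\,v=0$. I would induct on the minimal degree $\epsilon$ of a nonzero such solution: a minimal-degree solution generates a chain of vectors that, using skew-symmetry, can be completed to an odd-dimensional subspace on which $\A$ and $\B$ restrict to a single Kronecker block. Exploiting the skew-symmetric radical of the pencil, I would split off this block as a congruence-orthogonal direct summand and recurse on its complement, peeling off Kronecker blocks one at a time until the remaining pencil is regular, where the previous paragraph applies.

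The main obstacle is exactly this separation step: one must show that the Kronecker summand built from a minimal-index solution admits a congruence-orthogonal complement, so that the decomposition is a genuine direct sum of pencils and the induction closes. In the language of quiver representations, $(\A,\B)$ is a representation of the Kronecker quiver equipped with a nondegenerate skew-symmetric self-duality, and the crux becomes a Krull--Schmidt argument: such a self-dual representation decomposes into self-dual indecomposables---the Jordan and Kronecker blocks---together with hyperbolic pairs assembled from mutually dual non-self-dual indecomposables. Finally I would check that the minimal indices and elementary divisors are congruence invariants, which both makes the block sizes well defined and yields uniqueness of the canonical form up to reordering.
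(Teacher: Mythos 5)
The paper does not prove this statement at all: it is quoted verbatim as \cite[Theorem 2]{Bol}, and the authors use it purely as a black box (even Bolsinov--Zhang themselves attribute it to the classical literature on skew-symmetric matrix pencils, e.g.\ Thompson and, ultimately, Kronecker). So there is no in-paper argument to compare against; what can be said is that your outline follows the standard proof of this classical result, and it is essentially sound. The regular/singular dichotomy, the observation that $T=\B^{-1}\A$ is self-adjoint with respect to the symplectic form $\B$ (your computation $\B(Tx,y)=\A(x,y)=\B(x,Ty)$ is correct), the pairing of Jordan chains forced by $\B$-self-adjointness, and the peeling off of odd-dimensional Kronecker summands built from minimal-degree polynomial solutions of $(\lambda\A+\mu\B)v=0$ are exactly the ingredients of the textbook proof. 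You also correctly identify the genuinely hard step --- showing that the Kronecker summand admits a congruence-orthogonal complement so the induction closes --- and the Krull--Schmidt argument for self-dual representations of the Kronecker quiver (in the style of Sergeichuk) is a legitimate way to close it; the alternative is an explicit construction of the complement via the minimal-index chain, as in Gantmacher or Thompson. Two points would need care in a full write-up: first, in the regular case you must justify that after a projective change of parameter some member of the pencil is nondegenerate and then translate the resulting normal form back, which is where the $\lambda_i=\infty$ Jordan blocks of the statement actually come from; second, the uniqueness assertion (blocks defined up to permutation, as in the paper's subsequent remark) requires proving that minimal indices and elementary divisors are congruence invariants, which you mention but defer. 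As a proof plan rather than a proof, it is the right plan, and it is a genuine argument where the paper offers only a citation.
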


\begin{remark}
The blocks $\A_i$ and $\B_i$ of the canonical form given in Theorem \ref{JKTheorem} are defined uniquely up to permutation.
\end{remark}

\begin{remark}
    Consider the pencil \( \Pp=\{\A  + \lambda \B\} \) generated by skew-symmetric bilinear forms $\A$, $\B$. Then
    \begin{itemize}
        \item The {\em rank} of \( \Pp \) is defined as 
    \[
    \operatorname{rank} \Pp = \max_\lambda \operatorname{rank} (\A  + \lambda \B).
    \]
    \item The numbers \( \lambda_i \) that appear in the Jordan blocks \( \A_i \) of the canonical form given in Theorem \ref{JKTheorem} are called {\em characteristic numbers} of \( \Pp \). They are those numbers for which the rank of \( (\A  + \lambda \B) \) with 
    \( \lambda = \lambda_i \) is not maximal, i.e.
    \[
    \operatorname{rank} (\A  + \lambda_i \B) < \operatorname{rank} \Pp.
    \]

    \item By Theorem \ref{JKTheorem}, the pair of skew-symmetric bilinear forms $\A$, $\B$ can be simultaneously reduced to the canonical block-diagonal form. This result is also known as a {\em Jordan--Kronecker decomposition} which remains invariant under the replacement of \( \A, \B \) by any other pair \( (\A  + \lambda \B) \), \( (\A  + \mu \B) \) except for a change in the characteristic numbers by the transformation 
    \[ \lambda_i \mapsto \frac{\lambda_i - \lambda}{\mu - \lambda_i}. \]
    The number and sizes of Jordan and Kronecker blocks in the Jordan--Kronecker decomposition is called {\em algebraic type} of $\Pp$. We say that two pencils have the same algebraic type if they have the same Kronecker blocks and there is a one-to-one correspondence between their spectra such that the sizes of Jordan blocks for any corresponding characteristic numbers are the same. 
    \end{itemize}
        
\end{remark}

\begin{definition}
An $n$-dimensional Lie algebra $\G$ is an $n$-dimensional vector space $\G$ together with a skew-symmetric bilinear map
\[
\G \times \G \to \G ,\;\; (x,y) \mapsto [x,y],
\]
satisfying the Jacobi identity.

If the underlying field of the vector space $\G$ is the complex field then $\G$ is a complex Lie algebra. If we fix a basis $\{e_1,\,e_2,\, \dots,\,e_n\}$ on an $n$-dimensional Lie algebra $\G$, then
\[
[e_i,e_j]=\sum_{k=1}^n c_{i j}^ke_k,
\]
where the numbers $c_{i j}^k$ are called the structure constants of $\G$.
\end{definition}
    
   Throughout the rest of paper, let $\G$ be an $n$-dimensional complex Lie algebra and $\G^*$ be its dual space unless otherwise stated. Denote by $\{e_1,\dots,e_n\}$ the basis of $\G$. Let $\{e^1,\dots,e^n\}$ be the dual basis of this basis. For a pair of points \( x, a \in \G^* \), consider the skew-symmetric bilinear forms $\A_x = \left( \sum c_{ij}^k x_k \right)$, $\A_a = \left( \sum c_{ij}^k a_k \right)$ and the pencil \( \{\A_x  + \lambda \A_a\} \) generated by them. Here, recall that \( c^k_{ij} \) are the structure constants of \(\G\) and $x_k$, $a_k$ are the coordinates of $x$, $a$ with respect to the dual basis $e^k$, respectively. Clearly, the algebraic type of \( \{\A_x + \lambda \A_a\} \) essentially depends on the choice of \(x\) and \(a\). However, it remains ``constant'' almost everywhere.

    \begin{proposition}[{\cite[Proposition 1]{Bol}}]\label{Zar}
    There exists a non-empty Zariski open subset \( U \subset \G^* \times \G^* \) such that the algebraic type of the pencil \( \{\A_x + \lambda \A_a\} \) is the same for all \( (x, a) \in U \).
    \end{proposition}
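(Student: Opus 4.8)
The plan is to exhibit the algebraic type as being controlled by finitely many discrete invariants, each of which is constant off a proper algebraic subset of $\G^* \times \G^* \cong \C^{2n}$. Since this ambient space is irreducible, any finite intersection of non-empty Zariski open sets is again non-empty and Zariski open, so taking $U$ to be such an intersection will finish the argument. Writing $P_{x,a}(\lambda) = \A_x + \lambda \A_a$, observe that its entries are (linear) polynomials in $(x,a)$ and affine in $\lambda$. I would split the invariant into its Kronecker part (the sizes of the Kronecker blocks) and its Jordan part (the characteristic numbers together with the sizes of the Jordan blocks over them), since the two require different genericity mechanisms.

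For the Kronecker structure I would use the classical description of the minimal indices through the ranks of the block Toeplitz matrices $\mathcal{M}_k(x,a)$ assembled from $\A_x$ and $\A_a$: the number of Kronecker blocks of each size is a fixed integer combination of the quantities $\rho_k(x,a) = \rank \mathcal{M}_k(x,a)$, and only finitely many $k$ (bounded by $\dim V$) are relevant. Because $\mathcal{M}_k(x,a)$ has polynomial entries, each $\rho_k$ is Zariski lower semicontinuous, so the locus where it attains its maximal value is a non-empty Zariski open set. On the intersection of these loci all the $\rho_k$ are simultaneously maximal, hence constant, and therefore the whole Kronecker block structure — including the generic rank $\rank \Pp$ and the corank $n - \rank \Pp$, i.e. the number of Kronecker blocks — is constant there.

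For the Jordan structure the obstacle is that the characteristic numbers $\lambda_i$ are algebraic, not rational, functions of $(x,a)$, so they cannot be read off from a single polynomial rank. Instead I would pass to the field $F = \C(x_1,\dots,x_n,a_1,\dots,a_n)$ and regard $P_{x,a}(\lambda)$ as a pencil over $F$ in the indeterminate $\lambda$. Computing its Smith normal form over $F[\lambda]$ yields invariant factors $d_1(\lambda)\mid\cdots\mid d_r(\lambda)$ whose irreducible factorizations over $\overline{F}$ encode exactly the generic characteristic numbers and the Jordan block sizes over them — this is the generic algebraic type. Specializing $(x,a)$ to a concrete point $(x_0,a_0)\in\C^{2n}$, the invariant factors and their root pattern are preserved provided (i) no leading coefficient or pivot denominator vanishes, so that no degree drops and no $\lambda_i$ escapes to infinity (the latter handled anyway by the Möbius invariance $\lambda_i \mapsto (\lambda_i - \lambda)/(\mu - \lambda_i)$ noted above), and (ii) the relevant discriminants and pairwise resultants stay non-zero, so that the number of distinct characteristic numbers and the Jordan partition over each do not change through eigenvalue collisions. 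Each such condition is the non-vanishing of a fixed polynomial in $(x_0,a_0)$, hence cuts out a non-empty Zariski open set.

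Intersecting the finitely many Zariski open sets from the Kronecker and Jordan parts produces the desired $U$, non-empty by irreducibility of $\C^{2n}$; on $U$ both the Kronecker blocks and the Jordan-over-characteristic-number data agree with the generic type, which is exactly the assertion that the algebraic type is constant on $U$. I expect the Jordan part to be the main difficulty: one must argue carefully that the factorization type of the invariant factors is genuinely an open condition — that eigenvalue collisions, degree drops, and escapes to infinity are each confined to a proper algebraic subset — since, unlike the rank functions, these features are not visibly governed by a single polynomial until they have been packaged through discriminants and resultants of the specialized invariant factors.
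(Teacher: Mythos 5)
The paper offers no proof of this proposition to compare against: it is imported verbatim from \cite{Bol} (Proposition 1 there), and the citation is the paper's entire justification. Your reconstruction therefore stands on its own, and in my judgment it is correct; any proof of such a statement must be a genericity argument, and yours assembles the right ingredients. The Kronecker half is unproblematic: the minimal indices are fixed integer combinations of ranks of block matrices whose entries are polynomial in $(x,a)$, rank is Zariski lower semicontinuous, and the loci of maximal rank are non-empty open. The Jordan half is, as you say, the delicate one, but your conditions (i) and (ii) do suffice once packaged as follows: fix a unimodular factorization $UPV=D$ of the pencil over $F[\lambda]$, $F=\C(x_1,\dots,x_n,a_1,\dots,a_n)$; on the locus where every denominator appearing in $U$ and $V$, the numerators and denominators of $\det U$ and $\det V$, the leading coefficients of the invariant factors $d_1\mid\cdots\mid d_r$, and the discriminants and pairwise resultants of the distinct irreducible factors of $d_r$ all remain non-zero, the specialized identity $U_0P_0V_0=D_0$ is still a unimodular factorization over $\C[\lambda]$, so the specialized pencil has invariant factors $d_j|_{(x_0,a_0)}$ with the same degrees and the same pattern of root multiplicities and coincidences; separability (characteristic zero) and pairwise coprimality guarantee that these discriminants and resultants are non-zero elements of $F$, so each condition defines a non-empty Zariski open set, and irreducibility of $\G^*\times\G^*\cong\C^{2n}$ makes the finite intersection non-empty. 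Since ``same algebraic type'' demands only a bijection of spectra preserving Jordan block sizes, not equality of the characteristic numbers, this is exactly what specialization preserves. What the paper's citation buys is brevity; what your argument buys is an explicit list of the genericity conditions, which is also what legitimizes the paper's practice of computing $\rank\A_x$ and $p(\lambda)$ with symbolic variables, i.e. at the generic point of $\G^*\times\G^*$.
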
 

    Let \( U \) be a non-empty Zariski open subset from Proposition \ref{Zar}. We say that \( (x, a) \in U \) is a \emph{generic pair}. The corresponding pencil \( \{ \A_x + \lambda \A_a \} \) is also called \emph{generic}.

\begin{definition}[{\cite[Definition 1]{Bol}}]
    The algebraic type of a generic pencil \( \{\A_x + \lambda \A_a \} \) is called the \emph{Jordan-- Kronecker invariant} of \( \G \).
    In particular, we say that the Lie algebra \( \G \) is of
    \begin{itemize}
        \item \emph{Kronecker type},
        \item \emph{Jordan type},
        \item \emph{mixed type},
    \end{itemize}
    if the Jordan--Kronecker decomposition for the generic pencil 
        \(\{ \A_x + \lambda \A_a \}\) consists of
    \begin{itemize}
        \item only Kronecker blocks,
        \item only Jordan blocks,
        \item both Jordan and Kronecker blocks,
    \end{itemize}
    respectively.
\end{definition}

\section{An algorithm determining Lie algebra types}\label{sec3}

To determine whether Jordan or Kronecker blocks appear in the Jordan--Kronecker decomposition for the generic pencil \(\{ \A_x + \lambda \A_a \}\), we use the result stated in the following proposition.

 \begin{proposition}\label{xd}
\begin{enumerate}
    \item The number of Kronecker blocks equals the index of $\G$, denoted by $\ind \G$, in which
    \begin{equation}\label{indG}
        \ind \G= \dim \G - \max_{x \in \G^*} \operatorname{rank} \A_x.
    \end{equation}
    \item There are no Jordan blocks if the characteristic polynomial \( p(\lambda) \) is trivial, i.e. \( p(\lambda)=1 \), where \( p(\lambda) \) is determined as follow:
     \begin{itemize}
                \item Consider all diagonal minors of the matrix \( \A_x\) of order \( \rank \, \G = \max_{x \in \G^*} \operatorname{rank} \A_x\);
                \item Take the Pfaffians, i.e., square roots, for each of these minors;
                \item Let $p_0(x)$ be the greatest common divisor of all these Pfaffians;
                \item Then \( p(\lambda) = p_0(x+\lambda a)\).
            \end{itemize}
\end{enumerate}
 \end{proposition}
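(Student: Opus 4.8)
The plan is to prove both statements by passing, for a fixed generic pair $(x,a)\in U$, to the simultaneous block reduction of Theorem~\ref{JKTheorem} and then analysing each block separately. I would label the data of the generic pencil $\{\A_x+\lambda\A_a\}$ as follows: Jordan blocks of half-sizes $k_1,\dots,k_p$ with characteristic numbers $\lambda_1,\dots,\lambda_p$ (so of sizes $2k_1,\dots,2k_p$), and Kronecker blocks of odd sizes $2m_1+1,\dots,2m_q+1$. One preliminary remark drives everything: since $(x,a)$ is generic, both $\A_x$ and $\A_a$ attain the maximal rank $\rank\G$, so neither $\lambda=0$ nor $\lambda=\infty$ is a characteristic number. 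In particular every Jordan block has a finite characteristic number, $\rank\Pp=\rank\A_x=\rank\G$, and the form $\A_x$ (the value of the pencil at $\lambda=0$) is itself a generic member of the pencil.

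For part~(1) I would compute the corank of a generic member $\A_x+\lambda\A_a$ block by block. A Jordan block $\A_i+\lambda\B_i$ is nonsingular for every $\lambda$ except its single characteristic number, hence contributes $0$ to the kernel at a generic $\lambda$; a Kronecker block is skew-symmetric of odd size $2m_j+1$ with rank exactly $2m_j$ for all $\lambda$, hence contributes exactly $1$. Summing, the corank of a generic pencil member is the number $q$ of Kronecker blocks. On the other hand, by the preliminary remark $\A_x$ is a generic member, so its corank is $\dim\G-\rank\G=\ind\G$. Equating the two counts gives $q=\ind\G$, which is the assertion~\eqref{indG}.

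For part~(2) I would use that the Pfaffian of a block-diagonal skew-symmetric matrix is the product of the Pfaffians of its blocks, and evaluate along the line $x+\lambda a$. A principal submatrix of order $\rank\G$ that is nonsingular for generic $\lambda$ must retain each Jordan block in full and delete exactly one index from each Kronecker block: each Kronecker block, having odd size, must lose at least one index to admit a nonsingular principal subblock, and since only $q=\dim\G-\rank\G$ indices are deleted in total (by part~(1)), exactly one index is removed from each Kronecker block and none from any Jordan block. On the $i$-th Jordan block the Pfaffian equals $c_i(\lambda-\lambda_i)^{k_i}$ for a nonzero constant $c_i$, since that block is the only source of the rank drop at $\lambda=\lambda_i$; this factor is the same for all such minors. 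The deletion on a Kronecker block produces a factor depending on which index is removed. Ranging over all admissible minors, these Kronecker factors have no common zero, so their greatest common divisor is a nonzero constant, and therefore $p(\lambda)=p_0(x+\lambda a)=\prod_{i=1}^{p}(\lambda-\lambda_i)^{k_i}$ up to a nonzero scalar. This product is the constant $1$ exactly when $p=0$, that is, exactly when $\G$ has no Jordan blocks.

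The step I expect to be the main obstacle is the coprimality used at the end of part~(2): one must write the standard Kronecker block in the canonical form of Theorem~\ref{JKTheorem} and verify that the Pfaffians obtained by different admissible deletions (for instance a power of $\lambda$ for one deletion against a nonzero constant for another) are jointly coprime, so that forming the greatest common divisor cancels every Kronecker contribution and leaves precisely $\prod_i(\lambda-\lambda_i)^{k_i}$. A related point is to justify passing from the greatest common divisor of the Pfaffians as polynomials on $\G^*$, which defines $p_0$, to the greatest common divisor of their restrictions to the line $x+\lambda a$; this is where genericity is used again, since, writing each maximal Pfaffian as $p_0$ times a cofactor, the cofactors share no hypersurface and hence have common zero locus of codimension at least two, which a generic line avoids, so the two greatest common divisors agree. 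Finally, because $\lambda=\infty$ is not a characteristic number, the degree of $p(\lambda)$ is exactly $\sum_i k_i$ with no drop at infinity, confirming that $p(\lambda)=1$ is equivalent to the absence of Jordan blocks.
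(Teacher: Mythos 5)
Your proposal is correct in substance, but it takes a genuinely different route from the paper, because the paper does not actually prove Proposition~\ref{xd}: its ``proof'' is purely a citation, identifying item (1) with \cite[Proposition 2, Item (1)]{Bol}, item (2) with the equivalence of statements (1) and (3) in \cite[Corollary 5]{Bol}, and the recipe for $p(\lambda)$ with \cite[p.~59]{Bol}. You instead derive both items from the canonical form of Theorem~\ref{JKTheorem} by a block-by-block analysis, in effect reconstructing Bolsinov--Zhang's arguments; this buys a self-contained proof (modulo Theorem~\ref{JKTheorem}) at the cost of redoing work the paper delegates to the reference. Your part (1) is complete and correct: for a generic pair one has $\rank \A_x=\rank \A_a=\rank \Pp$, so neither $0$ nor $\infty$ is a characteristic number, Jordan blocks are nonsingular away from their characteristic numbers, and each odd-size Kronecker block contributes exactly one to the corank, giving $q=\dim\G-\rank\G=\ind\G$. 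Your preliminary exclusion of $\infty$-Jordan blocks is also precisely what legitimizes the equivalence in part (2), since such blocks have constant Pfaffian and are invisible to $p(\lambda)$.

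Two points in part (2) deserve attention. The coprimality you flag as the main obstacle does work out: for the standard Kronecker block of size $2m+1$, deleting an index from the $m$-dimensional group yields Pfaffian $0$, while deleting the $j$-th index of the $(m+1)$-dimensional group yields $\pm\lambda^{\,m+1-j}$ for $j=1,\dots,m+1$, so among these Pfaffians there is even a nonzero constant and their gcd is $1$. The genuine omission is earlier in the argument: $p_0$ is defined through the diagonal minors of $\A_x$ written in the \emph{fixed} basis $\{e_1,\dots,e_n\}$ of $\G$, whereas your computation takes diagonal minors of the block-diagonal matrix obtained after the change of basis of Theorem~\ref{JKTheorem}. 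To connect the two you need that the gcd of the order-$2r$ principal sub-Pfaffians (with $2r=\rank\G$) is invariant, up to a nonzero scalar, under a congruence $A\mapsto P^{T}AP$. This is true, by the Cauchy--Binet formula for Pfaffians, $\operatorname{Pf}\bigl((P^{T}AP)_{I,I}\bigr)=\sum_{|J|=2r}\det(P_{J,I})\,\operatorname{Pf}(A_{J,J})$, which shows that the ideal generated by these sub-Pfaffians, and hence their gcd, is congruence-invariant; but as written your argument computes the gcd in the wrong basis without this bridge. Your codimension-at-least-two argument for exchanging ``take gcd, then restrict to the line $x+\lambda a$'' with ``restrict, then take gcd'' is correct and supplies the other bridge that is needed.
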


\begin{proof}
    This proposition is, in fact, one of the key results of~\cite{Bol}. Item (1) is a restatement of~\cite[Proposition 2, Item (1)]{Bol}. The assertion that “There are no Jordan blocks if the characteristic polynomial \( p(\lambda) \) is trivial’’ follows from the equivalence between statements (1) and (3)  of~\cite[Corollary 5]{Bol}. Finally, the method used here to determine \( p(\lambda) \) can be found in the introduction of the characteristic polynomial of~\cite[p.59]{Bol}. 
\end{proof}
 
Based on Proposition~\ref{xd}, we propose an algorithm to determine the type of a given Lie algebra as follows.

\begin{algorithm}[h]
	\KwIn{Structure constants $c_{ij}^k$ of an $n$-dimensional Lie algebra $\G$}
	\KwOut{The type of $\G$}
	Determine the matrix $A_x$, and then compute $\ind \G$ by formula~\eqref{indG}\;
	\eIf{$\ind \G =0$}
		{return $\G$ is of Jordan type}
		{
        Determine $p(\lambda)$\;
        \eIf{$p(\lambda) = 1$}
        {return $\G$ is of Kronecker type}
        {return $\G$ is of mixed type}
        }
	\caption{Determining the type of a Lie algebra}\label{alg1}
\end{algorithm}


\begin{example}\label{Ex1}
    Consider the 4-dimensional Lie algebra $\G$ with non-trivial Lie brackets
    \[
    [e_3,e_1]=e_1,\; \; [e_3,e_4]=e_2.
    \]
    The non-zero structure constants of $\G$ are
    \[
    c_{13}^1=-1,\;\;c_{31}^1=1,\;\; c_{34}^2=1,\;\;c_{43}^2=-1.
    \]
    So we have 
    \[
    A_x = \begin{pmatrix}
        0&0&-x_1&0\\
        0&0&0&0\\
        x_1&0&0&x_2\\
        0&0&-x_2&0
    \end{pmatrix}.
    \]
    We obtain $\ind \G=2.$ Next, we need to determine $p(\lambda)$. Because all diagonal minors of order 2 of \( \A_x\) belong to the set $\{0,\, x_1^2,\, x_2^2\}$, it is easy to see that $p(\lambda)=1$. Thus, $\G$ is of Kronecker type.
\end{example}

We use \textsc{Matlab} to implement the above algorithm. The procedure is as follows.
\begin{enumerate}
    \item Input the dimension $n$ of the Lie algebra $\G$.
    \item Input the non-trivial Lie brackets of $\G$ to determine the matrix $A_x$ by running the program named ``\texttt{Lie\_brackets.m}''. Code of the program ``\texttt{Lie\_brackets.m}'' is
\begin{verbatim}
A = sym(zeros(n));
disp('Enter the non-trivial Lie brackets
(A(i,j) = [e_i, e_j], i < j): ')
\end{verbatim}

    \item Perform the determinations of the matrix $A_x$, the index of $\G$ and the characteristic polynomial $p(\lambda)$ in order to conclude the type of $\G$. This is executed by running the program named ``\texttt{Type\_of\_G.m}''. Code of the program ``\texttt{Type\_of\_G.m}'' is
\begin{verbatim}
for i=1:n
    for j=i+1:n
        A(j,i)=-A(i,j);
    end
end

r=rank(A);
indG=n-r;

idx = nchoosek(1:n, r);

minors = sym(zeros(size(idx, 1), 1));
pf_vals = sym(zeros(size(idx, 1), 1));

for k = 1:size(idx, 1)
    rows = idx(k, :);
    cols = idx(k, :); 
    subM = A(rows, cols);
    minors(k) = det(subM); 
    pf_vals(k) = sqrt(minors(k));
end

g = pf_vals(1);
for k = 2:length(pf_vals)
    g = gcd(g, pf_vals(k));
end

if indG==0
       disp('G is of Jordan type.')
   elseif g==1
       disp('G is of Kronecker type.')
   else
         disp('G is of mixed type.')
   end
\end{verbatim}
\end{enumerate}

\begin{example}
    For the Lie algebra $\G$ presented in Example \ref{Ex1}, the corresponding \textsc{Matlab} code is as follows.
    \begin{verbatim}
    >> n=4;
    >> syms x1 x2 x3 x4;
    >> Lie_brackets
    Enter the non-trivial Lie brackets
    (A(i,j) = [e_i, e_j], i < j):
    >> A(1,3)=-x1; A(3,4)=x2;
    >> Type_of_G
    G is of Kronecker type.
\end{verbatim}
The last line returned by the algorithm shows that $\G$ is of Kronecker type.
\end{example}

\section{Application to 7-dimensional Lie algebras}
In this section, we consider several 7-dimensional solvable Lie algebras with 5-dimensional nilradicals, classified by Vu et al. \cite{Vu}, to further illustrate the application of the algorithm presented in this paper. According to classification of Vu et al., the 7-dimensional complex Lie algebras listed in Table A.2 have the following structure.

\begin{table}[h!]
\centering
\begin{tabular}{|p{1cm}|p{10.1cm}|}
\hline
\textbf{Alg.} & \textbf{Non-trivial Lie brackets} \\
\hline

$L_{1}$ &
$[x_{1},x_{2}]=x_{3},\;
 [x_{1},x_{3}]=x_{4},\;
 [x_{1},x_{7}]= -x_{1},\;
 [x_{2},x_{7}]=2x_{2},$\\
 &$
 [x_{3},x_{7}]=x_{3},\;
 [x_{5},x_{6}] = -x_{5},\;
 [x_{6},x_{7}]=x_{4}$ \\ \hline

$L_{2}$ &
$[x_{1},x_{2}]=x_{3},\;
 [x_{1},x_{3}]=x_{4},\;
 [x_{1},x_{7}]=-x_{1},\;
 [x_{2},x_{6}]=-x_{2},$ \\
 &$
 [x_{3},x_{6}]=-x_{3},\;
 [x_{3},x_{7}]=-x_{3},\;
 [x_{4},x_{6}]=-x_{4},\;
 [x_{4},x_{7}]=-2x_{4},$ \\
 &$
 [x_{6},x_{7}]=x_{5}$ \\ \hline
 
 $L_{3}^{a}$ &
 $[x_{1},x_{2}]=x_{3},\;
 [x_{1},x_{3}]=x_{4},\;
 [x_{1},x_{7}]=-ax_{1},\;
 [x_{2},x_{6}]=-x_{2},$\\
 &$
 [x_{3},x_{6}]=-x_{3},\;
 [x_{3},x_{7}]=-ax_{3},\;
 [x_{4},x_{6}]=-x_{4},\;
 [x_{4},x_{7}]=-2ax_{4},$\\
 &$
 [x_{5},x_{7}]=-x_{5}$ ($a\neq 0$) \\ \hline

$L_{4}^{ab}$ &
$[x_{1},x_{2}] = x_{3},\;
 [x_{1},x_{3}] = x_{4},\;
 [x_{1},x_{6}] = -x_{1},\;
 [x_{2},x_{6}] = -ax_{2},$\\
 &$
 [x_{2},x_{7}] = -bx_{2},\;
 [x_{3},x_{6}] = -(1+a)x_{3},\;
 [x_{3},x_{7}] = -bx_{3},$\\
 &$
 [x_{4},x_{6}] = -(2+a)x_{4},\;
 [x_{4},x_{7}] = -bx_{4},\;
 [x_{5},x_{7}] = -x_{5}$ ($b\neq 0$) \\ \hline

$L_{5}^{a}$ &
$[x_{1},x_{2}] = x_{3},\;
 [x_{1},x_{3}] = x_{4},\;
 [x_{1},x_{6}] = -x_{1},\;
 [x_{1},x_{7}] = -x_{2},$\\
 &$
 [x_{2},x_{6}] = -x_{2},\;
 [x_{3},x_{6}] = 2x_{3},\;
 [x_{4},x_{6}] = -3x_{4},\;
 [x_{5},x_{6}] = -a x_{5},$\\
 &$
 [x_{5},x_{7}] = -x_{5}$ \\ \hline

$L_{6}$ &
$[x_{1},x_{2}] = x_{3},\;
 [x_{1},x_{3}]=x_{4},\;
 [x_{1},x_{7}]=-x_{1}-x_{5},\;
 [x_{2},x_{6}]=-x_{2},$\\
 &$
 [x_{3},x_{6}]=-x_{3},\;
 [x_{3},x_{7}]=-x_{3},\;
 [x_{4},x_{6}]=-x_{4},\;
 [x_{4},x_{7}]=-2x_{4},$\\
 &$
 [x_{5},x_{7}]=-x_{5}$ \\ \hline

$L_{7}^{a}$ &
$[x_{1},x_{2}]=x_{3},\;
 [x_{1},x_{3}] = x_{4},\;
 [x_{1},x_{6}] = -x_{1},\;
 [x_{1},x_{7}] = -x_{5},$\\
 &$
 [x_{2},x_{6}] = -a x_{2},
 [x_{2},x_{7}] = - x_{2},
 [x_{6},x_{3}] = (1+a)x_{3},
 [x_{3},x_{7}] = -x_{3},$\\
 &$
 [x_{4},x_{6}] = -(2+a)x_{4},\;
 [x_{4},x_{7}] = -x_{4},\;
 [x_{5},x_{6}] = -x_{5}$ \\ \hline

$L_{8}^{a}$ &
$[x_{1},x_{2}] = x_{3},\;
 [x_{1},x_{3}]=x_{4},\;
 [x_{2},x_{6}] = -x_{2},\;
 [x_{2},x_{7}] = -x_{4},$\\
 &$
 [x_{3},x_{6}] = -x_{3},\;
 [x_{4},x_{6}] = -x_{4},\;
 [x_{5},x_{6}] = -a x_{5},\;
 [x_{5},x_{7}] = -x_{5}$ \\ \hline

$L_{9}$ &
$[x_{1},x_{2}] = x_{3},\;
 [x_{1},x_{3}] = x_{4},\;
 [x_{1},x_{7}] = -x_{1},\;
 [x_{2},x_{6}] = -x_{2},$\\
 &$
 [x_{2},x_{7}] = -x_{5},\;
 [x_{3},x_{6}] = -x_{3},\;
 [x_{3},x_{7}] = -x_{3},\;
 [x_{4},x_{6}] = -x_{4},$\\
 &$
 [x_{4},x_{7}] = -2x_{4},\;
 [x_{5},x_{6}] = -x_{5}$ \\ \hline

$L_{10}^{a}$ &
$[x_{1},x_{2}] = x_{3},\;
 [x_{1},x_{3}] = x_{4},\;
 [x_{1},x_{6}] = -x_{1},\;
 [x_{2},x_{6}] = -ax_{2},$\\
 &$
 [x_{2},x_{7}] = -x_{2}-x_{5},\;
 [x_{3},x_{6}]=-(1+a)x_{3},\;
 [x_{3},x_{7}] = -x_{3},$\\
 &$
 [x_{4},x_{6}] = -(2+a)x_{4},\;
 [x_{4},x_{7}] = -x_{4},\;
 [x_{6},x_{5}] = ax_{5},\;
 [x_{7},x_{5}] = x_{5}$ \\ \hline

$L_{11}$ &
$[x_{1},x_{2}] = x_{3},\;
 [x_{1},x_{3}] = x_{4},\;
 [x_{1},x_{7}] = -x_{1},\;
 [x_{2},x_{6}] = -x_{2},$\\
 &$
 [x_{3},x_{6}] = -x_{3},\;
 [x_{3},x_{7}] = -x_{3},\;
 [x_{4},x_{6}] = -x_{4},\;
 [x_{4},x_{7}] = -2x_{4},$\\
 &$
 [x_{5},x_{6}] = -x_{5},\;
 [x_{5},x_{7}] = -x_{4}-2x_{5}$ \\ \hline

$L_{12}^{a}$ &
$[x_{1},x_{2}] = x_{3},\;
 [x_{1},x_{3}] = x_{4},\;
 [x_{1},x_{6}] = -x_{1},\;
 [x_{2},x_{6}] = -a x_{2},$\\
 &$
 [x_{2},x_{7}] = -x_{2},\;
 [x_{3},x_{6}] = -(1+a)x_{3},\;
 [x_{3},x_{7}] = -x_{3},$\\
 &$
 [x_{4},x_{6}] = -(2+a)x_{4},\;
 [x_{4},x_{7}] = -x_{4},\;
 [x_{5},x_{6}] = -(2+a)x_{5},$\\
 &$
 [x_{5},x_{7}] = -x_{4}-x_{5}$ \\ \hline

\end{tabular}
\caption{Lie algebras listed in \cite[Table A.2]{Vu}.}
\end{table}

The types of these Lie algebras are presented in Table 2, following the application of the above algorithm.

\begin{table}[h!]\label{T2}
\centering
\begin{tabular}{|p{8.9cm}|p{2.2cm}|}
\hline
\textbf{Lie algebras} & \textbf{Types} \\
\hline
$L_{1}$, $L_{2}$ & Mixed\\
\hline
$L_{3}^{a}$, $L_{4}^{ab}$, $L_{5}^{a}$, $L_{6}$, $L_{7}^{a}$, $L_{8}^{a}$, $L_{9}$, $L_{10}^{a}$, $L_{11}$, $L_{12}^{a}$ & Kronecker\\
\hline
\end{tabular}
\caption{Types of Lie algebras listed in \cite[Table A.2]{Vu}.}
\end{table}

\bibliographystyle{siam.bst}
\bibliography{references}

\end{document}